\documentclass[twocolumn]{article} 
\usepackage{algorithm, algorithmic, setspace}
\usepackage{graphicx} 
\usepackage{amsmath} 
\usepackage{amssymb}  
\usepackage{balance}
\usepackage{amsfonts}
\usepackage{color}
\usepackage{subfig}
\usepackage[english]{babel}
\usepackage[latin1]{inputenc}
\usepackage{siunitx}
\usepackage{lipsum}
\usepackage{cite}
\usepackage{hyperref}
\usepackage{verbatim}
\usepackage{tikz,pgfplots}

\newtheorem{remark}{Remark}
\newtheorem{definition}{Definition}

\newtheorem{theorem}{Theorem}
\newtheorem{lemma}{Lemma}

\newtheorem{assumption}{Assumption}

\newtheorem{corollary}{Corollary}

\newcommand{\sto}{ \rm{s.t.} }

\newcommand{\norm}[1]{\left\lVert #1 \right\rVert}

\newcommand{\xstar}{x^{\star}}

\newcommand{\argmin}[1]{\underset{#1}{\mathrm{argmin\,}}}

\newcommand{\R}{\mathbb{R}}

\newcommand{\prox}{\mathrm{prox}}
\newcommand{\sign}{\mathrm{sign}}
\newcommand{\kron}{\otimes}

\definecolor{green2}{rgb}{0.2, 0.75, 0.2}

\title{\LARGE \bf Discrete-time feedback linearization control\\for nonsmooth constrained optimization}

\author{V. Cerone \and S. M. Fosson\thanks{$^{*}$ Corresponding author. The work of S. Pirrera received funding from the European Union's Horizon Europe programme under the Marie Sk\l{}odowska-Curie grant agreement No.~101276493. The authors are with the Dipartimento di Automatica e Informatica, Politecnico di Torino, Corso Duca degli Abruzzi 24, 10129 Torino, Italy;  e-mail: \{vito.cerone,sophie.fosson,simone.pirrera,alice\_re, \\diego.regruto\}@polito.it.
}\and  S. Pirrera\and  A. Re \and  D. Regruto  
}
\date{}

\usetikzlibrary{plotmarks, fit}

\newcommand{\tikzref}[1][]{%
    \begin{tikzpicture}[baseline,yshift=0.3em]
        \draw [mark repeat=2,mark phase=2,#1]
        plot coordinates {
            (0cm,0cm)
            (0.25cm,0cm)
            (0.5cm,0cm)
        }; \node[fit=(current bounding box),inner xsep=0.1em]{};
    \end{tikzpicture}%
}

\begin{document}
\maketitle

\begin{abstract}
    We propose a discrete-time feedback linearization control approach to solve nonsmooth linearly constrained composite optimization problems. By using proximal operators, we construct a dynamical system whose equilibria correspond to the stationary points of the optimization problem. 
    Interpreting the Lagrange multipliers as control inputs, we employ feedback linearization control to steer the obtained dynamical system to a stable equilibrium. 
    We analyze the convergence of the resulting closed-loop system both in the strongly convex setting and under the Polyak--\L{}ojasiewicz condition. Finally, we illustrate the applicability of the approach through numerical experiments.
\end{abstract}

\section{Introduction}\label{sec:IN}
%
Nonsmooth constrained optimization problems play a central role in many areas, including machine learning, system identification, and control~\cite{has15book}; however, their solution is challenging due to the combined presence of nondifferentiability and constraints.
Nonsmooth terms arise in applications for different purposes. For instance, to promote sparsity~\cite{has15book}, low-rank solutions~\cite{jiang14}, and to handle constraints~\cite{cerone_alternating_2023}.

For nonsmooth optimization, proximal operator methods naturally extend gradient-based approaches that fail due to nondifferentiability~\cite{par13}. In this context, some well-established algorithms are the proximal gradient descent~\cite{par13} and the alternating direction method of multipliers (ADMM) \cite{boy10}. In particular, when the nonsmooth term is the $\ell_1$ norm, widely used approaches include the iterative shrinkage-thresholding algorithm~\cite{dau04} and linearized Bregman iterations~\cite{cai09}.
A possible approach to analyzing the convergence of iterative optimization algorithms is to model them as continuous-time or discrete-time dynamical systems and use control-theoretic tools. Building on this perspective, which dates back to~\cite{arr58}, recent works analyze convergence and design novel optimization algorithms using feedback control techniques.
For example,~\cite{Scherer23} employs integral quadratic constraints to develop accelerated gradient descent algorithms for unconstrained problems. The work~\cite{cmo24} proposes proportional-integral (PI) and feedback linearization (FL) methods for smooth equality-constrained problems, while~\cite{allibhoy24} employs control barrier functions to handle smooth problems with both equality and inequality constraints. These works interpret the Lagrange multipliers as control inputs of a fictitious plant; we refer to this framework as controlled multipliers optimization (CMO).
In the context of time-varying optimization,~\cite{CASTI2025112107} and~\cite{astolfi_repetitive_2025} address online constrained optimization exploiting the internal model principle and repetitive control tools, respectively.

The works~\cite{cent25} and~\cite{nscmo26} introduce and analyze CMO techniques for nonsmooth optimization in the continuous-time framework, based on PI control. However, the continuous-time setting requires numerical discretization for implementation, which may limit the computational efficiency, as optimal discretization strategies remain unexplored.

To address this limitation, this paper develops a novel discrete-time CMO-based algorithm for nonsmooth composite and constrained optimization. 
Motivated by the faster convergence rate of the FL approach compared to the PI one in the smooth continuous-time setting~\cite{cmo24}, here we adopt FL control instead of the PI-based schemes in~\cite{cent25,nscmo26}. To this end, we restrict our attention to linearly-constrained problems, for which a tractable discrete-time FL controller can be obtained. 
While many proximal methods handle equality constraints via indicator functions~\cite{dhi19}, this can be challenging when the associated proximal operators lack closed-form solutions or are computationally expensive~\cite{cent25}. Therefore, we explicitly retain the equality constraints to facilitate computational tractability. 

The theoretical contribution of this work is twofold. First, we analyze the algorithm's convergence and establish global exponential stability in the strongly convex setting via contractivity analysis. Second, we generalize the result to nonconvex problems by using the proximal Polyak-\L{}ojasiewicz (P\L{}) condition~\cite{karimi16} and Lyapunov arguments. 
Finally, we assess the practical performance of the proposed algorithm through three numerical experiments, including an equality-constrained Lasso problem and the training of a single-layer neural network with a nonsmooth activation function.

\textbf{Outline.}~Sec.~\ref{sec:PS} introduces the problem formulation and the required theoretical background. 
Sec.~\ref{sec:PA} develops the proposed algorithm and analyzes its convergence in both the strongly convex setting and under the P\L{} inequality. 
Sec.~\ref{sec:NR} illustrates the effectiveness of the proposed method through numerical examples. 
Finally, Sec.~\ref{sec:CON} concludes the paper.

\textbf{Notation.}~For a vector $x \in \R^n$, $\norm{x}_p = (\sum_{i=1}^n \vert x_i\vert^p)^{1/p}$ denotes the $\ell_p$ norm and $\norm{x}_Q = \sqrt{x^\top Q x}$, where $Q \in \R^{n,n}$, is the weighted $Q$ norm. Given a matrix $A \in \R^{n,m}$, $A^\dagger$ is its Moore-Penrose pseudoinverse and $\norm{A}_2$ is the induced $\ell_2$ norm. $\rho(A)$ denotes the spectral radius of $A$.
Given a discrete-time signal $x$, $x^+$ denotes the forward time shift operator applied to $x$. $\iota_\mathcal{K}(x)$ is the indicator function of a set $\mathcal{K}$, i.e., $\iota_\mathcal{K}(x)=0$ if $x \in \mathcal{K}$ and $\iota_\mathcal{K}(x)=+\infty$ otherwise. For a map $\phi: \mathcal{X} \rightarrow \mathcal{Y}$, $\mathrm{Im}(\phi) \subseteq \mathcal{Y}$ is the image of $\phi$. 

\section{Problem statement and Background}
\label{sec:PS}
Let $f(x): \R^n \to \R$ be a smooth function and $g(x): \R^n \to \R$ be possibly nonsmooth. We consider the equality-constrained optimization problem
\begin{equation}\label{eq:opt_problem}
\begin{aligned}
    \min_{x \in \R^n}& \, f(x) + g(x) \\ & {\sto} \quad h(x) = Cx+d =0
\end{aligned} 
\end{equation}
where $C\in\R^{m,n}$ and $d\in\R^m$.
Typically, $f(x)$ denotes a cost function to be minimized, while $g(x)$ is a regularization term that promotes structural properties of $x$ or an indicator function that encodes possible additional problem constraints. 
%
%
In the remainder of this paper, we will consider the following assumptions.
\begin{assumption}\label{assumption_g}
    $g(x)$ is proper, lower semi-continuous and convex.
\end{assumption}
\begin{assumption}\label{assumption_C}
    $C$ is of full row rank, i.e., $\mathrm{rank}(C) = m$.
\end{assumption}
Under Assumption~\ref{assumption_C}, the matrix $CC^\top$ is invertible. This is without loss of generality: if two rows of $C$ are linearly dependent, the corresponding constraints are either equivalent or mutually exclusive. In the first case, one row of $C$ and the corresponding entry of $d$ can be removed to restore linear independence; in the second case, the problem is infeasible. For further details, we refer the reader to~\cite{qu19,cmo24}.

The goal of this work is to develop a discrete-time optimization algorithm grounded in a control-theoretic framework.  
Before presenting and analyzing the proposed method, we first introduce the necessary theoretical background. 
\subsection{Proximal operators}
Proximal operators play a central role in algorithms for nonsmooth optimization.
Given $g(x): \mathbb{R}^{n} \mapsto \mathbb{R} \cup \{+ \infty\}$, the proximal operator $\text{prox}_{\mu g}(v) : \mathbb{R}^{n} \mapsto \mathbb{R}^n$ of the scaled function $\mu g(x)$, where $\mu > 0$, is defined as:
\begin{equation}
    \text{prox}_{\mu g} (v) = \arg\min_{x \in \mathbb{R}^{n}} \left( g(x) + \frac{1}{2 \mu} \|x-v\|_2^2 \right).
\end{equation}

Under Assumption~\ref{assumption_g}, the proximal operator is a single-valued firmly non-expansive map, i.e., for any $u, v \in  \mathbb{R}^{n}$:
\begin{align*}
    & \| \prox_{\mu g}(u) - \prox_{\mu g}(v)\|_2^2  \leq \\
    & \qquad \leq\langle u - v, \text{prox}_{\mu g}(u) - \text{prox}_{\mu g}(v) \rangle.
\end{align*}
The Moreau envelope of $g$ is defined as:
\begin{equation}
     M_{\mu g}(v) = \inf_{x \in \R^n} \left (g(x) + \frac{1}{2 \mu}   \| x-v \|^2_2 \right).
\end{equation}
$M_{\mu g}(v) $ has domain $\R^n$ and is continuously differentiable, even when $g(x)$ itself is not. 
The gradient of the Moreau envelope can be computed as
\begin{equation}\label{eq:grad_moreau_envelope}
    \nabla M_{\mu g}(v) = \frac{1}{\mu}(v - \prox_{\mu g}(v)).
\end{equation}

\subsection{Feedback control of the Lagrange multipliers}\label{sec:cmo}
The CMO framework is a continuous-time, control-theoretic approach for solving constrained optimization problems~\cite{allibhoy24,cmo24}. The core of this approach lies in associating the optimization problem with a dynamical system in which the constraints are interpreted as outputs and the Lagrange multipliers as control inputs.
Given the optimization problem 
\begin{equation} \label{eq:opt_smooth}\begin{aligned}
    &\min_{x \in \mathbb{R}^n}  \, f(x) &~~{\sto} \quad h(x) = 0,
\end{aligned} \end{equation}   
\cite{cmo24} defines the fictitious plant
\begin{equation}\label{eq:cmo_plant}
    \mathcal{P}_1: \left\{ \begin{split}
        &\dot x = -\nabla f(x) - J_h^\top(x)\lambda \\
        &y = h(x)
    \end{split}\right.
\end{equation}
where $\lambda \in \mathbb{R}^m$ are the Lagrange multipliers and $J_h(x)$ is the Jacobian of $h(x)$. The equilibria of~\eqref{eq:cmo_plant} correspond to the stationary points of Problem~\eqref{eq:opt_smooth}. As shown in~\cite[Lemma~1]{cmo24}, if a controller stabilizes the closed-loop system while driving $y$ to zero, the resulting trajectories converge to a stationary point of~\eqref{eq:opt_smooth}. 
In~\cite{cent25} and~\cite{nscmo26}, this framework is extended to problems of the form~\eqref{eq:opt_problem} to handle nonsmooth terms.
In particular,~\cite{nscmo26} uses the Moreau envelope $M_{\mu g}$ to define the proximal augmented Lagrangian
\begin{align}\label{eq:prox_aug_lagr_nscmo}
    \mathcal{L}_{\mu} (x,\alpha, \lambda) 
    \!=\!f(x)\! + \!M_{\mu g}(x \!+\! \mu \alpha) 
    \!- \!\frac{\mu}{2} \| \alpha \|_2^2 
    \!+\! \lambda^\top h(x).
\end{align}
Based on~\eqref{eq:prox_aug_lagr_nscmo}, the following fictitious plant is defined:
\begin{equation}\label{eq:plant_nscmo}
\mathcal{P}_2:\;
\begin{cases}
\dot{x} = - \nabla_x \mathcal{L}_{\mu} (x,\alpha, \lambda) \\
y_1 = x - \prox_{\mu g}\bigl(x + \mu \alpha\bigr) \\
y_2 = h\bigl(x\bigr).
\end{cases}
\end{equation}
According to \cite[Lemma 1]{nscmo26}, the equilibrium points of~\eqref{eq:plant_nscmo} are stationary points of Problem \eqref{eq:opt_problem} if and only if $y_1=0$ and $y_2 = 0$ at equilibrium.
In~\cite{nscmo26}, the multiplier $\lambda$ is updated using a PI controller:
\begin{equation}
    \dot \lambda(t) = k_p J_h(x)\dot x + k_i h(x),
\end{equation}
while two alternatives are proposed for $\alpha$: the static law $\alpha = - \nabla f(x)$ and a dynamic controller that extends the one defined by purely integral action. See \cite{nscmo26} for details.
\section{Proposed approach}\label{sec:PA}
We begin by recasting the CMO framework in a discrete-time setting. We build upon the dynamics resulting from the static controller $\alpha = -\nabla f(x)$ proposed in \cite{nscmo26}. This leads to the continuous-time dynamics
\begin{equation}\label{eq:static_prox_cmo}
    \dot{x} = - \frac{1}{\mu} x + \frac{1}{\mu} \prox_{\mu g}(x - \mu   \nabla f(x)) - C^\top \lambda.
\end{equation}
At this stage, we retain the freedom to design the controller for $\lambda$.
We apply forward Euler discretization to~\eqref{eq:static_prox_cmo} with step size $\mu$, obtaining the discrete-time fictitious plant
\begin{equation}\label{eq:plant}
    \mathcal{P}: \left\{ \begin{split}
        &x^+ = \prox_{\mu g}(x - \mu \nabla f(x)) - \mu C^\top \lambda\\
        &y = h(x).
    \end{split}\right.
\end{equation}
Thus, problem~\eqref{eq:opt_problem} is recast as a control problem in the CMO framework, as stated in the following corollary of~\cite[Lemma~1]{nscmo26}.
\begin{corollary}[Equilibria of $\mathcal{P}$]
    $x^\star$ is an equilibrium point of \eqref{eq:plant} satisfying $h(\xstar)=0$ if and only if $\xstar$ is a stationary point of the optimization problem in Eq.~\eqref{eq:opt_problem}. 
\end{corollary}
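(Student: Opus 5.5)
The plan is to prove the statement directly from the optimality characterization of the proximal operator. Under Assumption~\ref{assumption_g}, for $\mu>0$ we have $p=\prox_{\mu g}(v)$ if and only if $v-p\in\mu\,\partial g(p)$. We take "stationary point of~\eqref{eq:opt_problem}" in its standard sense for composite problems with linear constraints: there exists $\lambda^\star\in\R^m$ such that
\begin{equation*}
0\in\nabla f(\xstar)+\partial g(\xstar)+C^\top\lambda^\star,\qquad C\xstar+d=0.
\end{equation*}
This matches the stationarity notion in~\cite[Lemma~1]{nscmo26}, so the result could also be read off by noting that the fixed points of~\eqref{eq:plant} coincide with those of~\eqref{eq:static_prox_cmo}. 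The forward Euler step with step size $\mu$ preserves equilibria because the right-hand side only gets multiplied by $\mu$. We nonetheless give the short self-contained argument.

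\emph{Necessity.} Suppose $\xstar$ is an equilibrium of~\eqref{eq:plant} with some constant input $\lambda^\star$ and $h(\xstar)=0$. Then
\begin{equation*}
\xstar+\mu C^\top\lambda^\star=\prox_{\mu g}(\xstar-\mu\nabla f(\xstar)).
\end{equation*}
Set $p=\xstar+\mu C^\top\lambda^\star$ and apply the prox characterization:
\begin{equation*}
\xstar-\mu\nabla f(\xstar)-p\in\mu\,\partial g(p),
\end{equation*}
that is, $-\nabla f(\xstar)-C^\top\lambda^\star\in\partial g(p)$. The main obstacle is that the subdifferential is evaluated at $p$, not at $\xstar$. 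We therefore need $C^\top\lambda^\star=0$, which by Assumption~\ref{assumption_C} forces $\lambda^\star=0$.

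This does not follow from the equations alone. The resolution is to read the stationarity notion as in~\cite{nscmo26}, where the multiplier entering the prox argument plays the role of $\lambda$. Concretely, the equilibrium should be interpreted via the continuous-time plant~\eqref{eq:static_prox_cmo} with the nscmo output $y_1=x-\prox_{\mu g}(x+\mu\alpha)$, $\alpha=-\nabla f(x)$. If $y_1$ is not forced to vanish, we must instead absorb $C^\top\lambda^\star$ into the prox argument.

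The cleaner route I would commit to is the following. At an equilibrium with $h(\xstar)=0$, invoke~\cite[Lemma~1]{nscmo26}, whose output conditions $y_1=0$ and $y_2=0$ yield $\xstar=\prox_{\mu g}(\xstar-\mu\nabla f(\xstar))$ together with $C^\top\lambda^\star=0$. The prox characterization then gives $-\nabla f(\xstar)\in\partial g(\xstar)$, and since $C^\top\lambda^\star=0$ this is precisely the displayed stationarity condition with multiplier $\lambda^\star$.

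\emph{Sufficiency.} Given a stationary $\xstar$ with multiplier $\lambda^\star$, the same characterization run backwards produces an equilibrium. Take the input whose prox argument reads $\xstar-\mu(\nabla f(\xstar)+C^\top\lambda^\star)$, so that the prox output equals $\xstar$. Then $\xstar$ is a fixed point of~\eqref{eq:plant} for a suitable constant input, and $h(\xstar)=0$ holds by feasibility.

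I expect the bookkeeping of where $C^\top\lambda$ enters relative to the prox to be the only delicate step. I would resolve it by appealing to~\cite[Lemma~1]{nscmo26} as stated, treating the corollary as the discretization-invariance of the equilibrium set rather than reproving the lemma.
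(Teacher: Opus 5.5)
There is a genuine gap, which you locate but do not close. In the necessity direction you correctly reach $-\nabla f(\xstar)-C^\top\lambda^\star\in\partial g(\xstar+\mu C^\top\lambda^\star)$. You then obtain $C^\top\lambda^\star=0$ by invoking \cite[Lemma~1]{nscmo26} with $y_1=0$. The statement, however, assumes only $h(\xstar)=0$, i.e.\ $y_2=0$. Requiring $y_1=\xstar-\prox_{\mu g}(\xstar-\mu\nabla f(\xstar))=0$ is an extra hypothesis, and at an equilibrium it forces $\lambda^\star=0$. It therefore only covers feasible stationary points of the \emph{unconstrained} problem. In the sufficiency direction you move $C^\top\lambda^\star$ inside the prox, but in \eqref{eq:plant} the input acts outside it. An equilibrium of \eqref{eq:plant} requires $\prox_{\mu g}(\xstar-\mu\nabla f(\xstar))-\xstar\in\mathrm{Im}(C^\top)$, whereas stationarity only gives $\prox_{\mu g}(\xstar-\mu\nabla f(\xstar)-\mu C^\top\lambda^\star)=\xstar$.

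Neither step can be repaired, because the equivalence fails even under Assumptions~\ref{assumption_g}, \ref{assumption_C} and~\ref{assumption_f}. Take $f(x)=\tfrac12\norm{x-(1,1)^\top}_2^2$, $g=\iota_{\mathcal S}$ with $\mathcal S=\{x\in\R^2:x_2\le 0\}$, $C=[1,1]$, $d=0$, $\mu=\tfrac12$. At $\xstar=(\tfrac13,-\tfrac13)^\top$ one has $\xstar-\mu\nabla f(\xstar)=(\tfrac23,\tfrac13)^\top$. Its projection onto $\mathcal S$ is $(\tfrac23,0)^\top=\xstar+\mu C^\top\lambda^\star$ with $\lambda^\star=\tfrac23$, so $\xstar$ is an equilibrium with $h(\xstar)=0$. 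Yet the problem is convex with unique minimizer, hence unique stationary point, $(0,0)^\top$. That point is not an equilibrium for any $\lambda$, since $(\tfrac12,0)^\top\notin\mathrm{Im}(C^\top)$.

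The paper's proof is exactly your fallback (forward Euler preserves equilibria, then \cite[Lemma~1]{nscmo26}), and it has the same hole. That lemma, as recalled in Sec.~\ref{sec:cmo}, needs both $y_1=0$ and $y_2=0$, while the corollary imposes only $h(\xstar)=0$. What does make the equivalence hold is an extra hypothesis such as Assumption~\ref{assumption_proj}. With $w=\xstar-\mu\nabla f(\xstar)$, the conditions $\xstar\in\mathcal C$ and $\prox_{\mu g}(w)-\xstar\in\mathrm{Im}(C^\top)$ say exactly $\xstar=\mathcal P_{\mathcal C}(\prox_{\mu g}(w))=\prox_{\mu\bar g}(w)$. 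This is the fixed-point form of $-\nabla f(\xstar)\in\partial\bar g(\xstar)$, and the argument reverses. The multiplier form then follows from $\partial\bar g(\xstar)=\partial g(\xstar)+\mathrm{Im}(C^\top)$ under a standard qualification condition.
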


\textbf{Proof.}~System~\eqref{eq:plant} is obtained via forward Euler discretization of~\eqref{eq:static_prox_cmo} with stepsize $\mu$. Since forward Euler discretization preserves the set of equilibria for any nonzero stepsize (see, e.g., \cite{stuart1998dynamical}), and $\mu>0$ by construction, the result holds if the stationary points of the continuous-time dynamics \eqref{eq:static_prox_cmo} such that $h(\xstar)=0$ characterize the stationary points of \eqref{eq:opt_problem}. This is true by direct application of~\cite[Lemma~1]{nscmo26}. 

We now design a controller $\mathcal{K}$ that drives $\mathcal{P}$ toward an equilibrium satisfying $y=0$ and guarantees stability of $(x^\star,\lambda^\star)$ using FL. 
Let us define the continuous map
\begin{equation}\label{eq:Pi}
    \Pi(x) \doteq \prox_{\mu g}(x - \mu \nabla f(x)).
\end{equation}
With this definition, we can write the output's dynamics as
\begin{equation}\label{eq:y_plus}
    y^+ = C x^+ + d 
    = C\big( \Pi(x) - \mu C^\top \lambda \big) + d.
\end{equation}
We select the input $\lambda$ so as to impose $ y^+ = v$, obtaining
\begin{equation}\label{eq:lambda_FL}
    \lambda = \tfrac{1}{\mu}(CC^\top)^{-1}\big( C \Pi(x) + d - v \big),
\end{equation}
where $v$ is regulated by the linear dynamic controller
\begin{equation}\label{eq:v_dynamics}
    v^+ = K v + L y,
\end{equation}
and $K,L \in \mathbb{R}^{m \times m}$ are design matrices.
The controller dynamics $\mathcal{K}$ is therefore given by Eqs.~\eqref{eq:v_dynamics}--\eqref{eq:lambda_FL},
and the resulting closed-loop dynamics is:
\begin{subequations}\label{eq:FL-prox-CMO}
    \begin{align}
        & x^+ = \Pi(x) - C^\top (CC^\top)^{-1} (C \, \Pi(x) + d - v) \\
        & v^+ = K v + L (Cx + d).
    \end{align}
\end{subequations}
By construction, the $(y,v)$-subsystem evolves linearly as
\begin{equation}\label{eq:N}
    \begin{bmatrix} y^+ \\ v^+\end{bmatrix} = N\begin{bmatrix} y \\ v\end{bmatrix},
    \qquad
    N = \begin{bmatrix}
        0 & I \\
        L & K
    \end{bmatrix}.
\end{equation}
We design $K$ and $L$ so that $N$ is Schur stable with decay rate $r_e \in (0,1)$, i.e., there exists a symmetric matrix $Q = Q^\top \succ 0$ such that
\begin{equation}\label{eq:cond_N_stab}
    N Q N^\top - (1 - r_e) Q \prec 0
\end{equation}
We notice that it is possible to arbitrarily select the convergence rate $\rho(N) \leq \sqrt{1-r_e} <1$ by appropriately selecting $K$ and $L$, for instance by using semidefinite programming, as illustrated in~\cite{boyd_linear_1994}. 

\begin{remark}
    The proposed method is similar to the projected FL-CMO scheme in~\cite{pirrera2025}, which introduces a projection step after discretizing the FL-based algorithm in~\cite {cmo24}.
    Unlike~\cite{pirrera2025}, however, our approach explicitly decouples the constraint correction term and the proximal operator. This structural difference stems from the fact that our method is derived from the proximal augmented Lagrangian framework of~\cite{nscmo26}, whereas the approach in~\cite{pirrera2025} is based on the Lagrangian formulation introduced in~\cite{cmo24}. 
    Consequently, the constraint-related terms enter the equations differently, making our method easier to interpret and analyze. 
    Moreover, a key limitation of~\cite{pirrera2025} is that it only addresses the case of quadratic 
    $f(x)$. In contrast, our framework naturally extends to more general composite problems.
\end{remark}

\subsection{Convergence analysis: strongly convex case}\label{CA1}
In this section, we analyze the convergence of the dynamics~\eqref{eq:FL-prox-CMO}. Let us consider the following assumption.
\begin{assumption}\label{assumption_f}
    $f(x)$ is $m_f$-strongly convex, continuously differentiable with $L_f$-Lipschitz continuous gradient.
\end{assumption}

\noindent The following results hold.
\begin{lemma}\cite[Lemma~1]{qu19}\label{prop_B}
    Under Assumption~\ref{assumption_f}, for any $a,b\in\R^n$,  there exists a symmetric matrix $B(a,b)\in \R^{n,n}$ satisfying  $m_f I \preceq B(a,b) \preceq L_f I$ such that
    \begin{equation}\label{eq:B}
        \nabla f(a) - \nabla f(b)\!=\!B(a,b)\, (a-b).
    \end{equation}
\end{lemma}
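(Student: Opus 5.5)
The natural construction is the averaged Hessian along the segment joining $b$ to $a$:
\[
B(a,b) \doteq \int_0^1 \nabla^2 f\bigl(b + t(a-b)\bigr)\,dt .
\]
The fundamental theorem of calculus applied to $t\mapsto \nabla f(b+t(a-b))$ then gives \eqref{eq:B} directly. Symmetry of $B$ follows from symmetry of the Hessians. The spectral bounds $m_f I \preceq \nabla^2 f \preceq L_f I$ are the standard second-order characterizations of $m_f$-strong convexity and $L_f$-smoothness, and they pass through the integral.

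Assumption~\ref{assumption_f} only gives $f\in C^1$, so the Hessian may not exist. I would handle this with a direct algebraic construction rather than smoothing. Set $u=a-b$ and $w=\nabla f(a)-\nabla f(b)$; the case $u=0$ is trivial with $B=m_f I$. Strong convexity and the Lipschitz gradient give the two inequalities
\[
\langle w,u\rangle \ge m_f\|u\|^2, \qquad \|w\|\le L_f\|u\|,
\]
and co-coercivity of $\nabla f - m_f I$ gives the sharper bound
\[
\langle w - m_f u,\; L_f u - w\rangle \ge 0.
\]
Write $w=m_f u + z$. Then $z$ lies in the ball with diameter the segment $[0,(L_f-m_f)u]$, by the identity $\langle z,(L_f-m_f)u - z\rangle\ge0$. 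It remains to find a symmetric $S$ with $0\preceq S\preceq (L_f-m_f)I$ and $Su=z$. Working in the plane spanned by $u$ and $z$ (with $S=0$ on the orthogonal complement), choose
\[
S = \frac{z z^\top}{\langle z,u\rangle}
\]
when $\langle z,u\rangle>0$, and $S=0$ when $z=0$. This $S$ is symmetric, positive semidefinite, and satisfies $Su=z$. Its only nonzero eigenvalue is
\[
\frac{\|z\|^2}{\langle z,u\rangle} \le L_f - m_f,
\]
which is exactly the ball condition. If $z\neq 0$, the ball condition forces $\langle z,u\rangle\ge \|z\|^2/(L_f-m_f)>0$, so the two cases are exhaustive. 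Taking $B=m_fI+S$ completes the construction.

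The main obstacle is the co-coercivity inequality for $\nabla f - m_f I$. The function $\phi=f-\tfrac{m_f}{2}\|\cdot\|^2$ is convex with $(L_f-m_f)$-Lipschitz gradient; the Lipschitz part needs care, and a standard lemma (e.g.\ Nesterov, Thm.~2.1.12) covers it. The Baillon--Haddad theorem then gives
\[
\langle \nabla\phi(a)-\nabla\phi(b),u\rangle\ge \tfrac{1}{L_f-m_f}\|\nabla\phi(a)-\nabla\phi(b)\|^2,
\]
which is precisely the ball condition. Since the statement is cited from \cite[Lemma~1]{qu19}, one may alternatively invoke that reference directly.
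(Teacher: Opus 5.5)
Your proof is correct, and it supplies something the paper does not: the paper gives no argument for this lemma and simply imports it from \cite{qu19}. The averaged-Hessian formula only covers twice-differentiable $f$, so the real proof is your algebraic construction, and it checks out. With $u=a-b$, $w=\nabla f(a)-\nabla f(b)$ and $z=w-m_fu$, the matrix $B=m_fI+zz^\top/\langle z,u\rangle$ (or $m_fI$ if $z=0$) is symmetric and satisfies $Bu=w$. Its spectrum lies in $\{m_f,\,m_f+\|z\|^2/\langle z,u\rangle\}\subset[m_f,L_f]$ exactly because $\langle w-m_fu,\,L_fu-w\rangle\ge 0$. That inequality is Nesterov's Theorem~2.1.12 rewritten, so you may cite it directly and skip the Baillon--Haddad step. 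If $L_f=m_f$ the ball is $\{0\}$, so dividing by $L_f-m_f$ is harmless. Your identification of this inequality as the crux is sharper than it may look, because the inequality is also necessary: any admissible $B$ gives $\langle w-m_fu,\,L_fu-w\rangle=u^\top(B-m_fI)(L_fI-B)u\ge 0$, since the two factors commute and are positive semidefinite. Hence the two weaker bounds you list first could not suffice on their own; for instance $m_f=1$, $L_f=2$, $u=(1,0)$, $w=(1,\sqrt3)$ satisfies both but admits no such $B$. Compared with the bare citation, your route costs a few lines but gives an explicit identity-plus-rank-one $B(a,b)$ under only the hypotheses of Assumption~\ref{assumption_f}. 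Pointwise existence for each pair is all that the proof of Theorem~\ref{th1} uses, so your construction is enough there.
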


\begin{lemma}\cite[Lemma 2]{nscmo26}\label{prop_D}
    Under Assumption~\ref{assumption_g}, for any $a,b\in\R^n$, there exists a symmetric matrix $D(a,b)\in \R^{n,n}$ satisfying $0 \preceq D(a,b) \preceq I $  such that
    \begin{equation}\label{eq:D}
         \prox_{\mu g}(a) -\prox_{\mu g}(b)\! =\!D(a,b)\, (a-b).
    \end{equation}
\end{lemma}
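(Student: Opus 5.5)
We want a symmetric $D(a,b)$ with $0\preceq D\preceq I$ and $\prox_{\mu g}(a)-\prox_{\mu g}(b)=D(a,b)(a-b)$. Set $u=a-b$ and $w=\prox_{\mu g}(a)-\prox_{\mu g}(b)$. If $u=0$, the proximal operator is single-valued under Assumption~\ref{assumption_g}, so $w=0$ and $D=0$ works. Assume from now on that $u\neq 0$.

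The only property of $\prox_{\mu g}$ we use is firm non-expansiveness, $\|w\|_2^2\le \langle u,w\rangle$. From Cauchy--Schwarz this gives $\|w\|_2\le\|u\|_2$ and $\langle u,w\rangle\ge 0$. So the proof reduces to a finite-dimensional question: given vectors $u\neq0$ and $w$ with $\|w\|^2\le u^\top w$, build a symmetric $D$ with $Du=w$ and spectrum in $[0,1]$.

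If $w$ is parallel to $u$, write $w=tu$. Then $t^2\|u\|^2\le t\|u\|^2$, so $t\in[0,1]$, and $D=tI$ works.

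Otherwise $u$ and $w$ are linearly independent. Take the ansatz
\[
D=\frac{ww^\top}{u^\top w}+\gamma\,(I-P),
\]
where $P$ is the orthogonal projector onto $\mathrm{span}\{u,w\}$ and $\gamma\in[0,1]$ is arbitrary, for instance $\gamma=0$. Here $u^\top w>0$, because $u^\top w\ge\|w\|^2>0$ since $w\neq0$. This $D$ is symmetric, and $Du=w$ because $(I-P)u=0$.

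The eigenvalues of $D$ are $\gamma$ on $\mathrm{span}\{u,w\}^\perp$, $0$ on the part of the plane orthogonal to $w$, and $\|w\|^2/(u^\top w)$ on $w$. The last one lies in $[0,1]$ exactly by firm non-expansiveness, so $0\preceq D\preceq I$.

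The main step is finding this construction. A tempting alternative is a Householder-type rotation composed with the scaling $\|w\|/\|u\|$. It fails to be symmetric unless the rotation is trivial, whereas the rank-one form $ww^\top/(u^\top w)$ handles symmetry and the bounds at once.

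As a unified check, the rank-one formula also covers the parallel case $w=tu$ with $t>0$: there $ww^\top/(u^\top w)=t\,uu^\top/\|u\|^2$, and the complement is filled with $\gamma=t$ (or any $\gamma\in[0,1]$), which recovers $tI$ when $\gamma=t$.

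A remark to close: the result is the proximal counterpart of Lemma~\ref{prop_B}. It follows from the standard fact that the Clarke generalized Jacobian of a firmly non-expansive map has symmetric elements between $0$ and $I$. Using that fact would need a mean-value theorem for set-valued Jacobians, and the explicit construction above avoids it.
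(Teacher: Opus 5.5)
Your proof is correct. For $w\neq 0$, firm non-expansiveness gives $u^\top w\ge\norm{w}_2^2>0$. The matrix $ww^\top/(u^\top w)$ (your ansatz with $\gamma=0$) is symmetric, maps $u$ to $w$, and has spectrum $\{0,\norm{w}_2^2/(u^\top w)\}\subset[0,1]$. As you note, this already covers the parallel case, so the case split is optional.

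The paper contains no proof of this lemma. It imports the result from \cite[Lemma~2]{nscmo26} and points to \cite{qu19}, whose Lemma~1 (Lemma~\ref{prop_B} here) is the gradient analogue. That pairing suggests a route through the Moreau envelope:
\begin{itemize}
\item by \eqref{eq:grad_moreau_envelope}, $\prox_{\mu g}(v)=v-\mu\nabla M_{\mu g}(v)$, where $M_{\mu g}$ is convex and $(1/\mu)$-smooth;
\item the convex ($m_f=0$) analogue of Lemma~\ref{prop_B} then gives $\nabla M_{\mu g}(a)-\nabla M_{\mu g}(b)=B(a-b)$ with $0\preceq B\preceq \frac{1}{\mu}I$;
\item one takes $D=I-\mu B$.
\end{itemize}
A Clarke mean-value argument applied directly to $\prox_{\mu g}$ is another option.

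Your argument bypasses this machinery. It is elementary and self-contained and uses nothing but $\norm{w}_2^2\le\langle u,w\rangle$. It therefore proves the statement for every firmly non-expansive map, not only for proximal operators. What the structural routes offer in exchange is a $D$ tied to the geometry of $g$. For instance, a Clarke-type $D$ is diagonal for separable $g$ such as $\delta\norm{x}_1$, whereas yours is generally not. Theorem~\ref{th1} uses only the bounds $0\preceq D\preceq I$, so nothing is lost there.

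One side remark of yours is inaccurate, although nothing depends on it. The Clarke generalized Jacobian of a general firmly non-expansive map need not consist of symmetric matrices. Take $T=(I+R)/2$ with $R$ a planar rotation: $2T-I=R$ is nonexpansive, so $T$ is firmly non-expansive, yet its Jacobian $T$ is not symmetric. Symmetry holds for $\prox_{\mu g}$ specifically because it is the gradient of the convex function $\frac12\norm{v}_2^2-\mu M_{\mu g}(v)$. Your own construction shows that, for a single pair $(a,b)$, symmetry is never the obstruction.
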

We refer the reader to~\cite{qu19,nscmo26} for additional details.
%
\begin{theorem}\label{th1}
    Let Assumptions~\ref{assumption_g}, \ref{assumption_C}, and \ref{assumption_f} hold. 
    Let the matrices $K, L$ be such that $N$ in Eq.~\eqref{eq:N} satisfies~\eqref{eq:cond_N_stab} for some $Q = Q^\top \succ 0$ and some $r_e \in (0,1)$. Then, for all $\mu \in \left(0,\frac{2}{L_f}\right)$, the algorithm defined by~\eqref{eq:FL-prox-CMO} admits a unique globally exponentially stable equilibrium point $(x^\star, v^\star)$.
\end{theorem}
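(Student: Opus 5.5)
Let $P \doteq I - C^\top(CC^\top)^{-1}C$ denote the orthogonal projector onto $\ker C$, and $\Phi(x) \doteq x - \mu\nabla f(x)$, so that $\Pi(x) = \prox_{\mu g}(\Phi(x))$. We plan to exploit the cascade structure of~\eqref{eq:FL-prox-CMO}. Rewrite the $x$-update as
\begin{equation*}
x^+ = P\,\Pi(x) - C^\top(CC^\top)^{-1}(d - v).
\end{equation*}
The $(y,v)$-subsystem evolves autonomously via~\eqref{eq:N}. Condition~\eqref{eq:cond_N_stab} gives $\rho(N)\le\sqrt{1-r_e}<1$. Hence $(y,v)\to 0$ exponentially from any initial condition, for any $\mu$. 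In particular $v^\star=0$, and $Cx^+ + d = v$ forces $Cx_k + d \to 0$. The $x$-dynamics are thus a map $T(x) \doteq P\,\Pi(x) + c$, with $c \doteq -C^\top(CC^\top)^{-1}d$, driven by the exponentially vanishing input $C^\top(CC^\top)^{-1}v$.

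The core step is to show that $T$ is a contraction in $\norm{\cdot}_2$. Take $a,b\in\R^n$ and apply Lemma~\ref{prop_B} and Lemma~\ref{prop_D}:
\begin{equation*}
\Pi(a)-\Pi(b) = D\,(I-\mu B)\,(a-b),
\end{equation*}
with $0\preceq D\preceq I$ and $m_f I\preceq B\preceq L_f I$. Then $\norm{P}_2\le 1$ and $\norm{D}_2\le1$. Since $B$ is symmetric,
\begin{equation*}
\norm{I-\mu B}_2 \le \gamma \doteq \max\{|1-\mu m_f|,\,|1-\mu L_f|\},
\end{equation*}
and $\gamma<1$ exactly when $\mu\in(0,2/L_f)$, using $m_f\le L_f$ and $m_f>0$. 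Therefore $\norm{T(a)-T(b)}_2\le\gamma\norm{a-b}_2$. By Banach's fixed point theorem, $T$ has a unique fixed point $x^\star$.

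Next we identify the equilibrium. $(x^\star,0)$ is an equilibrium of~\eqref{eq:FL-prox-CMO}, since $Cx^\star+d = C P\Pi(x^\star) + Cc + d = 0$, so $v^+=0$. It is unique: any equilibrium $(\bar x,\bar v)$ must satisfy $(\bar y,\bar v)=N(\bar y,\bar v)$, and $N$ is Schur, so $\bar v=0$ and $\bar x$ is a fixed point of $T$. By the corollary on equilibria of $\mathcal{P}$, $x^\star$ is also a stationary point of~\eqref{eq:opt_problem}.

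The main obstacle is global exponential stability of the full cascade, which we handle with an ISS-type argument. Let $e_k\doteq x_k-x^\star$ and $w_k \doteq C^\top(CC^\top)^{-1}v_k$. Then
\begin{equation*}
\norm{e_{k+1}}_2\le\gamma\norm{e_k}_2+\norm{C^\top(CC^\top)^{-1}}_2\norm{v_k}_2 .
\end{equation*}
From the Lyapunov inequality~\eqref{eq:cond_N_stab}, whose dual form holds with $N^\top$ and $Q^{-1}$, we get $\norm{(y_k,v_k)}\le \kappa\,\bar r^{\,k}\norm{(y_0,v_0)}$ with $\bar r=\sqrt{1-r_e}$ and $\kappa$ depending on the condition number of $Q$. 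Unrolling the recursion gives
\begin{equation*}
\norm{e_k}\le\gamma^k\norm{e_0}+c'\sum_{j<k}\gamma^{k-1-j}\bar r^{\,j}\le \tilde c\,\max(\gamma,\bar r+\epsilon)^k\big(\norm{e_0}+\norm{(y_0,v_0)}\big).
\end{equation*}
The $\epsilon$ handles the case $\gamma=\bar r$, where a factor $k$ appears. Finally, $y_0 = Cx_0+d = Ce_0$, so the bound is linear in $\norm{(e_0,v_0)}$. This yields global exponential stability of $(x^\star,0)$.
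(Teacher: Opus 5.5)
Your proof is correct, but it takes a different route from the paper's.

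\textbf{The paper's route.} It passes to coordinates $z=(\eta,y,v)$ with $\eta=C^\perp x$ and writes $F(z_1)-F(z_2)=M(z_1,z_2)(z_1-z_2)$ with $M$ block upper triangular. It then shows that the \emph{entire} closed-loop map is a contraction in the norm weighted by $\mathrm{diag}(I_{n-m},\theta^2 Q^{-1})$. The coupling block $\Phi$ is handled with Young's inequality and a large $\theta$. Banach's theorem then gives existence, uniqueness and global exponential stability at once.

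\textbf{Your route.} You stay in $x$-coordinates and contract only the driven map $T(x)=(I-C^\dagger C)\Pi(x)+c$, using the same core estimate $\norm{D(I-\mu B)}_2\le\max\{|1-\mu m_f|,|1-\mu L_f|\}$. You get uniqueness of the equilibrium from the fact that $N$ has no eigenvalue at $1$. You prove exponential convergence by unrolling the cascade, i.e., by bounding a discrete convolution of two geometric sequences.

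\textbf{What each buys.} The paper's construction gives a single quantity $\norm{z-z^\star}_P$ that decreases monotonically at every step, a genuine contraction metric and Lyapunov function for the whole closed loop. The cost is the auxiliary parameters $\varepsilon,\theta$ and the coordinate change. Your argument is more elementary and makes explicit that $v^\star=0$ and $Cx^\star+d=0$. It also exhibits the rate transparently as essentially $\max\{\gamma,\sqrt{1-r_e}\}$, which the paper's $r$ recovers only in the limit $\varepsilon\to0$, $\theta\to\infty$. The price is a non-monotone bound whose overshoot constant depends on $\mathrm{cond}(Q)$, $\norm{C^\dagger}_2$, and the gap $|\gamma-\bar r|$ or the slack $\epsilon$. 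In effect, your unrolling is a self-contained version of the ISS cascade argument the paper invokes later for Theorem~\ref{th2}.

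A minor slip: the factor $\norm{(y_0,v_0)}$ is missing from the convolution term of your unrolled inequality, although your final bound includes it correctly.
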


\textbf{Proof.}~
    Let $C^\perp \in \mathbb{R}^{(n-m)\times n}$ have rows forming an orthonormal basis of the null space of $C$, so that $C^\perp C^\top = 0$ and $C^\perp C^{\perp\top} = I_{n-m}$. The coordinate transformation $z \doteq [\eta^\top,~y^\top,~v^\top]^\top$ with $\eta \doteq C^\perp x$ and $y = Cx + d$  is affine and invertible because $\begin{bmatrix} C \\ C^\perp \end{bmatrix}$ is nonsingular. 
    %
    Using~\eqref{eq:FL-prox-CMO}, the update in the new coordinates can be formulated as $z^+ = F(z)$, where
    \begin{equation}\label{eq:f_z}
        F(z) = 
        \begin{bmatrix}
             C^\perp \Pi(C^{\perp \top} \eta + C^\dagger (y-d)) \\
            v  \\
            K v + Ly 
        \end{bmatrix}.
    \end{equation}
    We now prove that $F$ is globally contractive with respect to a weighted $P$-norm, i.e., there exist $P = P^\top \succ 0 $ and $r \in (0,1)$ such that
    \begin{equation}\label{eq:contract}
        \| F(z_1) - F(z_2) \|_P \leq r \| z_1 - z_2 \|_P, \quad \forall z_1,z_2 \in \R^{n+m}.
    \end{equation}
    Exploiting Eqs.~\eqref{eq:B} and~\eqref{eq:D}, we obtain
    \begin{equation}\label{eq:matrice_M}
        F(z_1) \!-\! F(z_2)\! = \!M(z_1,z_2)\,(z_1 - z_2),
    \end{equation}
    where $M \equiv M(z_1,z_2) \doteq \begin{bmatrix} A(z_1,z_2)\ & \Phi(z_1,z_2)\ \\ 0 & N \end{bmatrix}$,
    \begin{equation*}
    \begin{split}
      &A(z_1,z_2) \doteq C^\perp D(z_1,z_2) (I-\mu B(z_1,z_2)) C^{\perp\top},  \\
      &\Phi(z_1,z_2) \doteq [ C^\perp D(z_1,z_2) (I- \mu B(z_1,z_2))C^\dagger, 0]
    \end{split}
    \end{equation*}
    and $N$ is defined in Eq.~\eqref{eq:N}. 
    Since $M$ depends on the pair $(z_1,z_2)$, establishing~\eqref{eq:contract} requires uniform bounds on the blocks of $M$, holding for any $(z_1,z_2)$. 
    Ass.~\ref{assumption_f}, guarantees that $ \|I - \mu B\|_2 \leq \max\big\{|1-\mu m_f|,\,|1-\mu L_f|\big\} \doteq c$. We have $c < 1$ for $\mu \in \left(0,\tfrac{2}{L_f}\right)$ and $m_f > 0$. Exploiting the properties of $D$ in Lemma~\ref{prop_D} and $\|C^\perp\|_2 = 1$, we have
    \begin{equation}\label{eq:bound_A_Phi}
        \|A(z_1,z_2)\|_2 \le c, \quad
        \|\Phi(z_1,z_2)\|_2 \le c\,\|C^\dagger\|_2 \doteq \varphi.
    \end{equation}     
   If we define $P_N \doteq Q^{-1} \succ 0$ and $\underline{\lambda}_N \doteq \lambda_{\min}(P_N) > 0$, condition~\eqref{eq:cond_N_stab} is equivalent to $\|N\|_{P_N} \le \sqrt{1-r_e} < 1$.   
   
   For $\theta > 0$ we define the weight $ P \doteq \begin{bmatrix} I_{n-m} & 0 \\ 0 & \theta^2 P_N \end{bmatrix}$, and we split $\Delta z = (z_1 - z_2)$ as $\Delta z = [\Delta \eta^\top, \Delta w^\top]^\top$, with $\Delta w \doteq [\Delta y^\top, \Delta v^\top]^\top$. 
    Using Young's inequality, which holds for any $\varepsilon>0$, and the fact that $\underline{\lambda}_N \| \cdot \|_2^2 \leq \| \cdot \|_{P_N}^2$, we obtain
    \begin{align*} 
    & \|M \Delta z\|_P^2 = \|A \Delta \eta + \Phi \Delta w\|_2^2 + \theta^2 \|N \Delta w\|_{P_N}^2\\
    &\le (1+\varepsilon ) c^2 \|\Delta \eta\|_2^2 + \left( 1+\frac{1}{\varepsilon} \right) \varphi^2 \|\Delta w\|_2^2 +\\
    & ~~~ + \theta^2(1-r_e)\|\Delta w\|_{P_N}^2 = r^2 \| \Delta z\|_P^2,
    \end{align*}
    where $r^2 = \max \left( \! (1+\varepsilon)c^2; \left( \!1+\frac{1}{\varepsilon} \!\right)\!\frac{\varphi^2}{\underline{\lambda}_N \theta^2} + (1-r_e) \! \right).$
    We have $r \in (0,1)$ if $\varepsilon \in \left(0, (1-c^2)/c^2\right)$ and $\theta^2 > \left(1+\varepsilon^{-1}\right){\varphi^2}/{r_e \underline{\lambda}_N}$.
    
    By the Banach fixed-point theorem, the contraction~\eqref{eq:contract} implies that~\eqref{eq:f_z} admits a unique fixed point $z^\star \in \R^{n+m}$ and that
    $\|z_k - z^\star\|_P \le r^k \|z_0 - z^\star\|_P$ for all $k \in \mathbb{N}$,
    i.e., $z^\star$ is globally exponentially stable.
    Finally, since the coordinate transformation is affine and invertible,
    there exists $\bar\kappa > 0$ such that
    \begin{equation*}\label{eq:dx_bounded_by_dz}
        \left\|\begin{bmatrix} x - x^\star \\ v - v^\star \end{bmatrix}\right\|_P
        \le \bar\kappa \, \|z - z^\star\|_P ,
    \end{equation*}
    and norm equivalence in finite dimension preserves the exponential decay.
    Thus the dynamics~\eqref{eq:FL-prox-CMO} admit a unique globally
    exponentially stable equilibrium $(x^\star,v^\star)$.

%
\subsection{Convergence analysis: proximal P\L{} condition}\label{CA2}
In this section, we analyze the convergence of~\eqref{eq:FL-prox-CMO} assuming that the cost function of problem \eqref{eq:opt_problem} satisfies the proximal P\L{} condition introduced in~\cite{karimi16}. 

\begin{definition}[Proximal P\L{} condition~\cite{karimi16}]\label{definition_PL}
    Let us consider the composite objective function $F(x)=f(x)+g(x)$. Assume $f$ is differentiable and $L_f$-smooth. Given $\mu>0$, we say that $F$ satisfies the proximal P\L{} condition if there exists $\alpha >0$ such that, for all $x$,
    \begin{equation}\label{eq: pl_ineq}
        \frac{1}{2} D_g(x, \mu) \geq \alpha (F(x) - F(x^*)),
    \end{equation}
    where $F(x^*)$ is the minimum value of $F$ over $\R^n$, and
    \begin{align*}
        D_g(x, \mu)
        = \frac{-2}{\mu} \min_{y \in \mathbb{R}^n} \Big\{
        & \langle \nabla f(x), y-x \rangle + \frac{1}{2 \mu}\|y - x\|_2^2 \\
        & + g(y) - g(x)
        \Big\}.
    \end{align*}
\end{definition}
%
%
For differentiable functions $f$, the proximal P\L{} inequality \eqref{eq: pl_ineq} reduces to the original formulation in~\cite{polyak63}, i.e., the condition
\begin{equation}
    \frac{1}{2}\|\nabla f(x)\|^2 \geq \alpha (f(x) - f(x^*)).
\end{equation}

Let $\mathcal{C} \doteq \{x \in \R^n : Cx+d=0\}$, $F^\star$ the optimal value of Problem~\eqref{eq:opt_problem}, and 
$\mathcal{P}_{\mathcal{C}}(w) \doteq C^{\perp\top}C^{\perp} w - C^\dagger d$ the orthogonal projection onto the affine set $\mathcal{C}$. We further denote by
\begin{equation}\label{eq:Fbar}
    \bar g \doteq g + \iota_{\mathcal{C}}, \qquad \bar F \doteq f + \bar g = F + \iota_{\mathcal{C}}
\end{equation}
the nonsmooth term and the objective of the constrained problem~\eqref{eq:opt_problem}; under Assumption~\ref{assumption_g}, $\bar g$ is proper, lower semi-continuous and convex, and $F^\star = \min_{x\in\R^n} \bar F(x)$.


\begin{assumption}\label{assumption_proj}
    The proximal operator of $g$ is compatible with the equality constraint, i.e., for all $w \in \R^n$,
    \begin{equation}\label{eq:prox_split}
        \mathcal{P}_{\mathcal{C}}\big(\prox_{\mu g}(w)\big) = \prox_{\mu \bar g}(w).
    \end{equation}
\end{assumption}
Assumption~\ref{assumption_proj} holds, e.g., when $g = \iota_{\mathcal{S}}$ with the constraint $x \in \mathcal{S}$ inactive on a neighbourhood of the optimal solution.
%
\begin{theorem}\label{th2}
    Let Assumptions~\ref{assumption_g}, \ref{assumption_C} and~\ref{assumption_proj} hold and let $f$ have an $L_f$-Lipschitz continuous gradient.
    Assume that $\bar F = f+g+\iota_{\mathcal{C}}$ satisfies the proximal P\L{} condition given in Definition~\ref{definition_PL} with $\mu = 1/L_f$ and $\alpha \in (0, L_f)$. 
    Then, the set of optimal solutions of~\eqref{eq:opt_problem} is globally exponentially stable for the dynamics~\eqref{eq:FL-prox-CMO}.
\end{theorem}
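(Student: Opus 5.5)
The plan is to reduce the closed loop~\eqref{eq:FL-prox-CMO} to a perturbed proximal-gradient iteration on $\bar F$, and then run a Lyapunov argument in the spirit of~\cite{karimi16}. In that argument the perturbation is driven by the exponentially stable linear $(y,v)$-subsystem~\eqref{eq:N}.

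\textbf{Step 1: rewrite the $x$-update.} Since $C^\top(CC^\top)^{-1}=C^\dagger$ and $\mathcal{P}_{\mathcal{C}}(w)=w-C^\dagger(Cw+d)$, the $x$-update in~\eqref{eq:FL-prox-CMO} reads
\begin{equation*}
x^+ = \mathcal{P}_{\mathcal{C}}\big(\Pi(x)\big) + C^\dagger v = T(x) + C^\dagger v,
\qquad T(x)\doteq \prox_{\mu\bar g}\big(x-\mu\nabla f(x)\big),
\end{equation*}
where the second equality uses Assumption~\ref{assumption_proj}. Thus $T$ is exactly the proximal-gradient map of $\bar F$ with $\mu=1/L_f$, and the disturbance $e_k\doteq C^\dagger v_k$ satisfies $\|e_k\|_2\le \kappa(1-r_e)^{k/2}\|(y_0,v_0)\|$ by~\eqref{eq:cond_N_stab}. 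Note also that $T(x)\in\mathcal{C}$ for every $x$, so $\hat x_{k+1}\doteq T(x_k)$ always has $\bar F(\hat x_{k+1})<\infty$, whereas $x_k$ itself in general does not.

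\textbf{Step 2: one-step descent with perturbation.} I would use the Lyapunov candidate
\[
W_k \doteq \bar F(\hat x_k)-F^\star + \gamma\,\|(y_k,v_k)\|_{Q^{-1}}^2 .
\]
The unperturbed estimate of~\cite{karimi16} gives $\bar F(T(\hat x))-F^\star\le(1-\alpha/L_f)(\bar F(\hat x)-F^\star)$. It is obtained by combining the $L_f$-descent lemma at $\hat x$ with the definition of $D_{\bar g}$ and the proximal P\L{} inequality~\eqref{eq: pl_ineq}.

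Here, however, the prox-gradient step is taken at $x_k=\hat x_k+e_{k-1}$ rather than at $\hat x_k$. So I would instead write the descent lemma and the prox optimality (three-point) inequality for $T(x_k)$ with the comparison point $u$ ranging over $\mathcal{C}$. Then I would expand $f(x_k)$ and $\nabla f(x_k)$ around $\hat x_k$ using $L_f$-smoothness. This yields
\[
\bar F(\hat x_{k+1})-F^\star\le \big(1-\tfrac{\alpha}{L_f}\big)\big(\bar F(\hat x_k)-F^\star\big) + c_1\|e_{k-1}\|_2\sqrt{\bar F(\hat x_k)-F^\star} + c_2\|e_{k-1}\|_2^2 .
\]
The square-root factor comes from bounding $\|\nabla f\|$-type cross terms by $\sqrt{D_{\bar g}}$, and hence by the function gap through quadratic growth (which is implied by proximal P\L{}). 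Young's inequality then absorbs the cross term into a slightly worse rate $1-\alpha/L_f+\epsilon<1$, which is possible since $\alpha<L_f$. Choosing $\gamma$ large enough, the $\|e\|^2$ term is dominated by the strict decrease of $\gamma\|(y,v)\|^2_{Q^{-1}}$. This gives $W_{k+1}\le \bar r\, W_k$ with $\bar r<1$.

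\textbf{Step 3: from function values to the optimal set.} Exponential decay of $\bar F(\hat x_k)-F^\star$, combined with the quadratic-growth consequence of proximal P\L{}, gives $\mathrm{dist}(\hat x_k,\mathcal X^\star)^2\le \tfrac{2}{\alpha}\big(\bar F(\hat x_k)-F^\star\big)$. Then $\mathrm{dist}(x_k,\mathcal X^\star)\le \mathrm{dist}(\hat x_k,\mathcal X^\star)+\|e_{k-1}\|_2$ decays exponentially. To get a bound in terms of the initial condition, I would bound $W_1$ via one nonexpansive prox step. Together these give global exponential stability of the optimal set.

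\textbf{Main obstacle.} The hard step is Step 2: controlling the effect of the perturbation $e$ on $\bar g(T(x))$ for a nonsmooth $g$. Function values of $g$ are not Lipschitz, so I cannot simply compare $\bar F(T(x_k))$ with $\bar F(T(\hat x_k))$. My first attempt is to avoid evaluating $g$ at perturbed points altogether. All $g$-terms then appear only at points of $\mathcal{C}$ through the three-point inequality, and the perturbation enters only through $f$ and $\nabla f$, which are $L_f$-smooth.
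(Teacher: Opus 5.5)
Your argument is correct, and it handles the constraint transient differently from the paper.

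\textbf{The paper's route.} It treats the closed loop as a cascade. Setting $y=0$ gives the zero dynamics $\xi^+=\mathcal{P}_{\mathcal{C}}(\Pi(\xi))=\prox_{\mu\bar g}(\xi-\mu\nabla f(\xi))$ on $\mathcal{C}$. It then applies \cite[Theorem~5]{karimi16} to obtain $V(\xi^+)\le(1-\mu\alpha)V(\xi)$ with $V=\bar F-F^\star$. Finally, it uses global Lipschitzness of $\Pi$ and ``standard ISS arguments for discrete-time cascades'' to transfer stability to the full loop.

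\textbf{Your route.} You use Assumption~\ref{assumption_proj} at every $x$, not only on $\mathcal{C}$. This rewrites the whole closed loop as a proximal-gradient method on $\bar F$ with additive disturbance $C^\dagger v_k$. You then carry out the ISS-type estimate yourself with the composite function $W_k$.

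\textbf{What each buys.} Your route is longer but self-contained. It never evaluates $g$ at infeasible points. It also makes explicit two steps that the paper's appeal to ISS theory leaves implicit: quadratic growth, and the one-step bound on $W_1$. These are what turn decay of $\bar F-F^\star$ into exponential decay of the distance to the optimal set, in terms of the initial condition. The paper's $V$ is extended-valued and possibly discontinuous (e.g.\ when $g$ is an indicator), so it is not a standard ISS-Lyapunov function, and that step is not routine. The paper's argument is shorter and shows the separation between the Schur $(y,v)$-subsystem and the optimization dynamics more transparently.

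\textbf{Two small points for your write-up.}
\begin{itemize}
\item The square-root term in Step 2 comes most directly from sufficient decrease, not from quadratic growth. Combine $D_{\bar g}(\hat x_k,\mu)\le 2L_f(\bar F(\hat x_k)-F^\star)$ (from the unperturbed descent step and $\bar F(T(\hat x_k))\ge F^\star$) with $\|T(\hat x_k)-\hat x_k\|_2\le \sqrt{D_{\bar g}(\hat x_k,\mu)}/L_f$ (from $L_f$-strong convexity of the prox subproblem). Quadratic growth is only needed in Step 3, where any constant suffices.
\item The perturbation entering the step $k\to k+1$ is $e_{k-1}=C^\dagger v_{k-1}=C^\dagger y_k$. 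This is why it can be dominated by the decrease of $\gamma\|(y_k,v_k)\|_{Q^{-1}}^2$; it is worth stating explicitly.
\end{itemize}
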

\textbf{Proof.}~
As shown in the proof of Theorem~\ref{th1}, the closed loop $z^+=F(z)$ is a cascade. The
$(y,v)$-subsystem is Schur by design and autonomous. Since $\Pi(x)$ is globally Lipschitz with constant $1+\mu L_f$, exponential stability of~\eqref{eq:FL-prox-CMO} follows from that of the zero dynamics
\begin{equation}\label{eq:zero_dynamics}
    \eta^+ = C^\perp \Pi(\xi), \quad \text{with} \quad
    \, \xi \doteq C^{\perp\top}\eta - C^\dagger d,
\end{equation}
obtained by setting $y=0$ in~\eqref{eq:f_z}, by standard ISS arguments for discrete-time cascades~\cite{jiang01}. 
Since $C^\perp C^\dagger = 0$, the variable $\xi$ evolves on the affine set $\mathcal{C}$ according to
\begin{equation}\label{eq:zero_dyn_xi}
    \xi^+ = 
    \mathcal{P}_{\mathcal{C}}(\Pi(\xi)) = \prox_{\mu \bar g}\big(\xi - \mu \nabla f(\xi)\big),
\end{equation}
where the last equality follows from Assumption~\ref{assumption_proj}.

Let us consider the candidate Lyapunov function $V(\xi) \doteq \bar F(\xi) - F^\star$. 
$V$ is a valid candidate Lyapunov function since $\eta \mapsto \xi = C^{\perp,\top} \eta - C^\dagger d$ is a bijection from $\R^{n-m}$ onto $\mathcal{C}$ and $V(\xi) > 0$ for all non-optimal $\xi$ as,
    \begin{equation}
        F^\star = \min_{x \in \mathcal C} F(x) = \min_\eta \bar F(C^{\perp,\top} \eta - C^\dagger d) \le \bar F(\xi).
    \end{equation}
    \noindent Since $f(\xi)$ is $L_f$-smooth and $\mu = 1/L_f$, we can apply \cite[Theorem~5]{karimi16} to $\bar F$ along~\eqref{eq:zero_dyn_xi}, yielding
    \begin{equation}
          \bar F(\xi^+) \leq \left(1- \mu \alpha \right) \bar F(\xi) + \mu \alpha F^\star,
    \end{equation}
    which, using the definition of $V(\xi)$, leads to
    \begin{equation}\label{eq:lyap_uncons_xi}
         V(\xi^+)\le \left(1- \mu\alpha\right) V(\xi).
    \end{equation}
    Eq.~\eqref{eq:lyap_uncons_xi} gives the linear decay of the objective, $\bar F(\xi_k) - F^\star \le (1-\mu\alpha)^k(\bar F(\xi_0)-F^\star)$. 
    Using $\alpha \in (0,L_f)$, $0 < 1 - \mu \alpha < 1$, and the result follows by Lyapunov argument.

\section{Numerical results}\label{sec:NR}
\subsection{Nonconvex cost function}
We consider a problem of the kind~\eqref{eq:opt_problem}, with nonconvex cost function $f(x) = x_1^4 + x_2^4 - x_1^2 - x_2^2$, $C = [1,1]$, $d=0.3$, and
where $g(x) = \iota_{P}(x)$ is the indicator function of the set $P = \{x \in \R^2: x_1 \leq -\frac{\sqrt{6}}{6}, x_2 \geq \frac{\sqrt{6}}{6}\}$. Fig.~\ref{fig:es1_plot} illustrates the setting. $f(x)$ has multiple local minima and a maximum in $x = 0$. It is easy to show that $\bar F$ is strongly convex on its domain, thus satisfying the proximal P\L{} inequality. Assumptions~\ref{assumption_g},~\ref{assumption_C}, and~\ref{assumption_proj} hold as well. Specifically,~\ref{assumption_proj} holds in a neighborhood of $x^\star$ according to the discussion below Assumption~\ref{assumption_proj}. Consequently, the conclusions of Theorem~\ref{th2} hold on such a neighborhood. 
%

In Fig.~\ref{fig:es1_plot}, we show the sequence of iterates generated by the proposed algorithm in Eq.~\eqref{eq:FL-prox-CMO}. We observe that the algorithm converges to the global feasible optimum, regardless of the initial condition. This confirms its effectiveness for a class of optimization problems with nonconvex $f(x)$, provided that $g(x)$ is suitably selected.
\begin{figure}
    \centering
    \includegraphics[width=\linewidth]{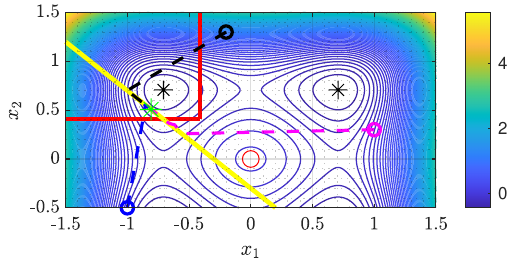}
    \caption{Level sets of $f(x)$ (see colorbar), maximum of $f(x)$ (${\color{red}\circ}$), local minima of $f(x)$ ($*$), contour of the set $P$  (\tikzref[color=red!95!white, line width=1.5pt]), equality constraint (\tikzref[color=yellow!75!white, line width=1.5pt]), global optimal solution (${\color{green}*}$). The trajectories generated by the algorithm~\eqref{eq:FL-prox-CMO} (\tikzref[color=black!90!white, line width=1.5pt, dash pattern=on 4pt off 1pt], \tikzref[color=magenta!80!white, line width=1.5pt, dash pattern=on 4pt off 1pt], \tikzref[color=blue!80!white, line width=1.5pt, dash pattern=on 4pt off 1pt]) with  initial conditions ({\color{black}$\circ$}, {\color{magenta}$\circ$}, {\color{blue}$\circ$}) achieve the global optimal solution.}
    \label{fig:es1_plot}
\end{figure}
\subsection{Equality-constrained Lasso}
In this section, we evaluate the performance of the proposed method on the equality-constrained Lasso problem, which is widely adopted in the context of sparse regression and compressed sensing~\cite{has15book} and is formalized as follows:
\begin{equation}
    \begin{aligned}
    \min_{x \in \mathbb{R}^n} \quad & \frac{1}{2} x^\top W x + \delta \| x\|_1 &    \text{s.t.} \quad & Cx + d = 0.
    \end{aligned}
\end{equation}
For this problem, $g(x) = \delta \norm{x}_1$ and the associated proximal operator admits a closed form representation known as the soft thresholding function, whose component-wise expression is $\sign (v_i) \max ( |v_i| - \delta,0)$.

We generate a symmetric positive definite $W \in \mathbb{R}^{n, n}$, and $C \in \mathbb{R}^{m, n}$, $d \in \mathbb{R}^{m}$ with independent and normally distributed entries. We fix $\delta=1$. With these choices Assumptions~\ref{assumption_g}, \ref{assumption_C}, and \ref{assumption_f} are satisfied, thus the problem is strongly convex.
We select $n = 100$ and $m = 50$, and perform $100$ random runs, each with newly generated data. 

We evaluate the average computational time, the number of iterations, and the percentage of zero components in the computed solutions. 
Table~\ref{table:confront_algs} shows the results and compares them with the solutions obtained by \texttt{CVX MATLAB}'s package using \texttt{SDPT3} solver~\cite{sdpt3} and with ADMM, which is an effective algorithm for constrained and composite optimization problems (see, e.g.,~\cite{boy10}). Compared to \texttt{SDPT3}, our method needs more iterations, but each iteration is cheaper, since \texttt{SDPT3} uses an interior-point method that requires factoring large matrices at every step. Compared to ADMM, our approach reduces both runtime and the number of iterations. 
\begin{table*}
        \centering
        \begin{tabular}{c|c c c}
         & {Total Runtime [\textrm{s}]} & {Iterations} & {Sparsity [\texttt{\%}]} \\
        \hline
        SDPT3      & 0.17 & 12 & 10.81 \\
        ADMM  &  0.14 & 11351 & 10.81  \\
        Ours~\eqref{eq:FL-prox-CMO} &   0.05 & 6508 &  10.81\\
        \end{tabular}
        \caption{Average metrics over 100 runs for equality-constrained Lasso. Sparsity denotes the ratio between null components and the dimension of the solution.}
        \label{table:confront_algs}
\end{table*}
\subsection{Training of a neural network}
We consider the nonlinear and nonconvex problem of training a single-hidden-layer feedforward neural network, i.e., estimating the parameters $w_1, w_2, b_1 \in \R^\nu$ and $b_2 \in \R$ of a function $\mathcal{N}: \R \rightarrow \R$ described by
\begin{equation}
    \mathcal{N}(u) = w_2^\top \,h( w_1 u + b_1) + b_2,
\end{equation}
where $\nu$ denotes the number of neurons and $h(\cdot)$ is the activation function. We assume that a set of measurements $\{u_i,y_i\}_{i=1}^N$, possibly perturbed by noise, is available for estimation. We train the network by solving the minimization problem $\min_{\theta} J(\theta)$ with loss defined by
\begin{equation}\label{eq:uncons_formul_loss}
    J(\theta) \doteq \left( \sum_{i=1}^N y_i - w_2^\top h( w_1 u_i + b_1) - b_2 \right)^2,
\end{equation}
where $\theta = [w_1^\top,b_1^\top,w_2^\top,b_2]^\top \in \R^{3\nu+1}$ collects all the parameters to be estimated.

Next, we show that for ReLU activation $h(z) = \max(0,z)$, the problem~\eqref{eq:uncons_formul_loss} can be recast in the nonsmooth composite constrained form~\eqref{eq:opt_problem}. Similar formulations have been considered in~\cite{Taylor_Burmeister_Xu_Singh_Patel_Goldstein_2016, Gao_Goldfarb_Curtis_2020}, where ADMM is employed for training. Here, we demonstrate that the proposed algorithm is a viable alternative in this context. {As discussed in~\cite{Taylor_Burmeister_Xu_Singh_Patel_Goldstein_2016}, the constrained optimization formulation exhibits advantages such as increased robustness to poor conditioning and gradient saturation.} To recast~\eqref{eq:uncons_formul_loss} in the form~\eqref{eq:opt_problem}, we introduce auxiliary variables $z_i,a_i \in \R^{\nu}$ defined for each $i \in \{1,\dots,N\}$ as
\begin{equation}
   z_i \doteq w_1u_i + b_1, \qquad a_i \doteq h(z_i).
\end{equation}
Collecting all unknown variables as $a = [a_1^\top,\dots,a_N^\top]^\top \in \R^{N\nu}$, $z = [z_1^\top,\dots,z_N^\top]^\top \in \R^{N\nu}$, and $x = [\theta^\top,z^\top,a^\top]^\top \in \R^{n_x}$, $n_x = 3\nu+1+2\nu N$, the Eq.~\eqref{eq:uncons_formul_loss} is recast to~\eqref{eq:opt_problem} with
\begin{subequations}
\begin{align}
   & f(x) \doteq \sum_{i=1}^N (y_i - w_2 a_i - b_2)^2, \quad d = 0, \\
   & C \doteq \begin{bmatrix}
        u \kron I_{\nu} & \mathbf{1}_N \kron I_{\nu} & 0_{N \nu, \nu +1} & -I_{N\nu} & 0_{N\nu}
    \end{bmatrix}.
\end{align}
\end{subequations}
The indicator function $g(x) = \iota_{P}(x)$ characterizes the set
\begin{equation}
    P \doteq \{x=[\theta^\top,a^\top,z^\top]^\top \in \R^{n_x}: a = h(z)\}.    
\end{equation}

The set $P$ is nonconvex, but its decoupled structure enables explicit computation. Specifically, $P$ can be partitioned as $P = I \times P_1 \times \dots \times P_{\nu N}$, where $I$ is the identity map for the unconstrained variables $\theta$, and each $P_i$ represents the scalar relation $a_{i,j} = h(z_{ij})$.
As a result, the projection reduces to solving a collection of independent scalar subproblems:
\begin{equation}
    z_{ij}^\star = \argmin{z \in \R}  (z-z_{ij})^2 + (h(z)-a_{ij})^2,
\end{equation}
which admit the following closed-form solution:
\begin{equation}
z_{ij}^\star =
\begin{cases} 
 z_{ij}, ~~~ \text{if}~z_{ij}\leq 0 ~\text{and}~a_{ij}\leq (1+\sqrt{2})\lvert z_{ij}\rvert \\
 \max \left(0,\frac{z_{ij} + a_{ij}}{2}\right) ~~~~ \text{otherwise}. 
\end{cases}
\label{eq:relu_projection}
\end{equation}

Fig.~\ref{fig:training} shows the evolution of the loss function during training computed on both training and validation datasets. We compare~\eqref{eq:FL-prox-CMO} with the Adam algorithm~\cite{KingmaB14}. The proposed method demonstrates superior performance, achieving lower loss values than Adam within a reduced number of iterations.
Additionally, we evaluate the final model's coefficient of determination on the test dataset, defined as
\begin{equation*}
R^2 = 1 - \frac{\sum_{i=1}^{n}(y_i - \hat{y}_i)^2}{\sum_{i=1}^{n}(y_i - \bar{y})^2},
\end{equation*}
where $y_i$ denotes the true value, $\hat{y}_i$ the predicted value, and $\bar{y}$ the mean of the true values, obtaining $R^2 = 0.99$ for the proposed algorithm \eqref{eq:FL-prox-CMO} and $R^2 = 0.98$ for Adam.

%
\begin{figure}
    \centering
    \includegraphics[width=\linewidth]{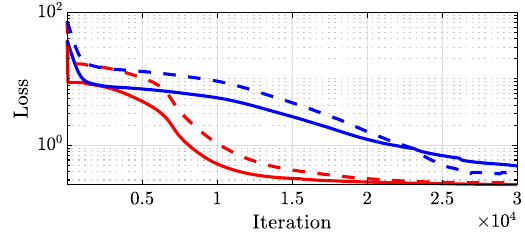}
    \caption{Evolution of the loss function during training:~\eqref{eq:FL-prox-CMO} (\tikzref[color=red!50!white, line width=1.5pt]) and Adam (\tikzref[color=blue!50!white, line width=1.5pt]). Evolution of the validation loss function:~\eqref{eq:FL-prox-CMO} (\tikzref[color=red!50!white, line width=1.5pt, dash pattern=on 4pt off 1pt]) and Adam (\tikzref[color=blue!50!white, line width=1.5pt, dash pattern=on 4pt off 1pt]). }
    \label{fig:training}
\end{figure}

\section{Conclusions}\label{sec:CON}
In this work, we present a discrete-time algorithm for linearly constrained nonsmooth optimization. 
We leverage the controlled multiplier optimization framework to define a dynamical system associated with the optimization problem and regulate it using feedback linearization, while handling nonsmooth terms through proximal operators.
We prove global exponential stability both in the strongly convex case and under the Polyak--\L{}ojasiewicz condition.
Finally, we validate our approach on three numerical experiments designed to capture different problem characteristics, thereby demonstrating the broad applicability of the method.
Future work will extend the proposed method to nonlinear equality constraints and apply it to relevant optimization problems arising in real-world engineering.

\balance
\bibliographystyle{ieeetr}
\bibliography{bibfile}

\end{document}